\documentclass[12pt, reqno]{amsart}
\usepackage{amssymb}
\input xypic
\xyoption{all}

\setlength{\oddsidemargin}{.5cm}
\setlength{\evensidemargin}{.5cm}
\setlength{\textwidth}{16cm}
\setlength{\parskip}{0.1in}

\newtheorem{thm}{Theorem}
\newtheorem{pro}[thm]{Proposition}
\newtheorem{cor}[thm]{Corollary}
\newtheorem{lem}[thm]{Lemma}

\newtheorem{con}{Conjecture}
\theoremstyle{remark}
\newtheorem{rem}[thm]{Remark}

\newtheorem{exa}[thm]{Example}

\newcommand{\calf}{{\mathcal F}}


\newcommand{\ideala}{{\mathfrak a}}

\newcommand{\idealm}{{\mathfrak m}}

\newcommand{\aut}{{\rm Aut}}
\newcommand{\autp}{{\rm Aut}_\C(\C[x,y])}

\newcommand{\bir}{{\rm Bir}}

\newcommand{\der}{{\rm Der}}
\newcommand{\derp}{{\rm Der_\C}(\C[x,y])}

\newcommand{\C}{{\mathbb K}}
\newcommand{\CC}{{\mathbb C}}
\newcommand{\K}{{\mathbb K}}

\newcommand{\tor}{\xymatrix{\ar@{-->}[r]&}}

\begin{document}

\begin{center}
\author{Lu\'is Gustavo Mendes and Ivan Pan}
\end{center}

\thanks{Research of L.G.Mendes  was partially supported by Pronex/CNPq of Brazil}
\thanks{Research of I. Pan was partially supported by ANII and PEDECIBA of Uruguay}

\title[automorphisms commuting with simple derivations]{On plane polynomial automorphisms commuting with simple derivations}
\maketitle


\section{introduction}\label{sec1}
 
We denote by $\C[x,y]$ the ring of polynomials in two variables over an algebraically closed field $\C$ of characteristic zero, and by $\autp$ the group consisting of $\C$-linear automorphisms of $\C[x,y]$. 

A \emph{derivation} of $\C[x,y]$ is a $\C$-linear map $D:\C[x,y]\to\C[x,y]$ such that $D(fg)=gD(f)+fD(g)$ for all $f,g\in\C[x,y]$. We denote by $\derp$ the $\C-$linear vector space of derivations of $\C[x,y]$; this is, in fact, a $\C[x,y]$-module.
  
If $D\in\derp$ we denote by $\aut(D)$ the subgroup of $\autp$ consisting of automorphisms commuting with $D$. In other words, $\autp$ acts on $\derp$ by conjugation as
\[\rho\cdot D=\rho D\rho^{-1},\quad    \rho\in\autp,\quad   D\in\derp,\]
and $\aut(D)$ is the isotropy of $D$ with respect to that action. 

A derivation $D$ is said to be \emph{simple} if it does not stabilize a nontrivial ideal. 

Finally, a derivation of the form  
\[D=\partial_x+(ay+b)\partial_y, \ a,b\in\C[x]\]
is said to be a \emph{Shamsuddin derivation}. In \cite{Sha} there is 
a criterion which allows to decide whether $D$ is simple or not,  depending if the 
equation $D(h)=ah+b$ has 
no solution $h\in\C[x]$ or admits such a solution, respectively.  It follows trivially that a Shamsuddin derivation with $a=0$ is not simple.

In \cite[Thm.6]{Bal} it is proved that $\aut(D)$ is trivial if $D$ is a simple  Shamsuddin  derivation.  

The main result of this work, proved in the next section, is the following:
\begin{thm}\label{thm2}
If  $D$ is a simple derivation, then $\aut(D)$ is trivial.
\end{thm}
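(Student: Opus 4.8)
My plan hinges on the following simple remark, which is the heart of the matter. Let $\rho\in\aut(D)$ and set $I=\bigl(\rho(x)-x,\ \rho(y)-y\bigr)\subseteq\C[x,y]$, the ideal cutting out the fixed‑point scheme of $\rho$. For every $f\in\C[x,y]$ one has $\rho(f)-f\in I$ (expand $f(\rho(x),\rho(y))-f(x,y)$ by telescoping in the two slots), so, using $\rho D=D\rho$, $D\bigl(\rho(x)-x\bigr)=\rho\bigl(D(x)\bigr)-D(x)\in I$ and likewise $D\bigl(\rho(y)-y\bigr)\in I$; since $D$ is a derivation and $I$ is generated by $\rho(x)-x$ and $\rho(y)-y$, the ideal $I$ is $D$-invariant. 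Now $I\neq(1)$ precisely when $\rho$ has a fixed point in $\A^2$, and $I\neq(0)$ as soon as $\rho\neq\mathrm{id}$; as $D$ is simple this forces $\rho=\mathrm{id}$. Note too that simplicity of a derivation is preserved under conjugation by elements of $\autp$ (invariant ideals go to invariant ideals), and that "having a fixed point" is a conjugation‑invariant property of $\rho$. So it suffices to show that every $\rho\in\aut(D)$ different from the identity fixes a point of $\A^2$; I will attack this through the structure of $\autp$.

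First, $\aut(D)$ is torsion free: a nontrivial finite‑order automorphism of $\A^2$ always has a fixed point (via the Jung--van der Kulk amalgamated structure it is conjugate into the affine or the triangular subgroup, and in either case a fixed point is easily produced), so it cannot commute with a simple derivation. Next suppose $\rho\in\aut(D)$ has infinite order. The group $\aut(D)$ is a closed subgroup of the ind‑group $\autp$ (commuting with $D$ is a closed condition). If $\rho$ is \emph{algebraic}, i.e. $\deg(\rho^n)$ is bounded — equivalently, by the Friedland--Milnor classification, $\rho$ is conjugate to an affine or to a triangular automorphism — then $\rho$ lies in a finite‑dimensional algebraic subgroup, so the Zariski closure of $\langle\rho\rangle$, which is contained in $\aut(D)$, is a positive‑dimensional algebraic group and hence contains a copy of $\G_m$ or of $\G_a$. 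A $\G_m\subseteq\aut(D)$ is linearizable and its nontrivial elements fix the origin, contradicting the crux, so it remains to exclude $\G_a\subseteq\aut(D)$. Its generator is a nonzero locally nilpotent derivation $\delta$ with $[\delta,D]=0$; by Rentschler's theorem we may assume (after a coordinate change, conjugating $D$ to a still simple derivation) that $\delta=r(x)\,\partial_y$ with $r\neq0$. Then $D$ preserves $\ker\delta=\C[x]$, so $D(x)\in\C[x]$; since $D$ is simple it has neither a nonconstant element in its kernel nor an invariant line, which forces $D(x)$ to be a nonzero constant. Feeding this into $[\delta,D]=0$ then forces $r$ to be constant and $D(y)\in\C[x]$, i.e. $D$ is (a scalar multiple of) a Shamsuddin derivation with $a=0$ — not simple, a contradiction.

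Finally, if $\rho\in\aut(D)$ has infinite order and is \emph{not} algebraic, then it is loxodromic, hence conjugate to a generalized Hénon automorphism; such automorphisms always have a fixed point in $\A^2$ (the fixed‑point equations have positive degree), so the crux again yields $\rho=\mathrm{id}$. This exhausts the cases and gives $\aut(D)=\{\mathrm{id}\}$. The steps I expect to cost the most effort are the Rentschler‑based analysis of the $\G_a$‑case — checking that the bracket relation really pins $D$ down to the non‑simple Shamsuddin form — together with the input that generalized Hénon maps possess fixed points in the affine plane. A purely elementary substitute for the last two paragraphs is to run through the (short) list of conjugacy classes of fixed‑point‑free affine and triangular automorphisms and show, by analysing the commutation equations $\rho(D(x))=D(\rho(x))$ and $\rho(D(y))=D(\rho(y))$ directly, that each one forces an invariant coordinate line (or, once more, the Shamsuddin‑with‑$a=0$ form), contradicting simplicity.
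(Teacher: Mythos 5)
Your argument is correct in its overall architecture, but it takes a genuinely different route from the paper's. The opening observation — that the fixed-point ideal $I=(\rho(x)-x,\rho(y)-y)$ is $D$-invariant whenever $\rho D=D\rho$, so a nontrivial $\rho\in\aut(D)$ commuting with a simple $D$ can have no fixed point — is a cleaner, purely algebraic substitute for the paper's Lemma~\ref{lem1} and Corollary~\ref{cor2}, which reach the same conclusion via formal power series solutions through a maximal ideal. Where the paths diverge is in exploiting ``$\rho$ is fixed-point free'': the paper invokes Lane's theorem to get a nontrivial $\rho$-stable ideal, passes to a smooth invariant curve, and uses Blanc--Stampfli to normalize it to $x$ or $x^ry^s-\lambda$, finishing by direct computation (Lemma~\ref{lem7}); you instead run the Jung--van der Kulk/Friedland--Milnor trichotomy on $\rho$ itself, handling the bounded-degree case by taking the Zariski closure of $\langle\rho\rangle$ inside a finite-dimensional algebraic subgroup and excluding $\G_m$ (linearization of torus actions) and $\G_a$ (Rentschler plus the bracket computation, which correctly lands on a Shamsuddin derivation with $a=0$), and the unbounded-degree case by the existence of fixed points for compositions of generalized H\'enon maps. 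Your approach buys more structural insight (it locates the obstruction in the classification of one-parameter subgroups and of dynamically interesting automorphisms), at the cost of heavier machinery; the paper's is more self-contained and computational.

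Two points need shoring up. First, the parenthetical justification ``the fixed-point equations have positive degree'' does not by itself prove that a composition of generalized H\'enon maps has a fixed point (translations also have positive-degree coordinate functions and are fixed-point free); you need the actual Friedland--Milnor count of $d=\deg(\rho)\geq 2$ fixed points with multiplicity, obtained by a Bezout argument at infinity, and you should check (or cite) that this, together with the elementary/H\'enon dichotomy, holds over an arbitrary algebraically closed field of characteristic zero, since the paper works over $\K$ and not only over $\CC$. Second, the $\G_a$ computation is essentially right but should be written out: from $\delta=r(x)\partial_y$ and $D(x)\in\K^*$ one gets $r(x)\,\partial_y(D(y))=A\,r'(x)$, and the degree comparison forces $r'=0$ and $\partial_y(D(y))=0$, whence $D$ is a constant multiple of a Shamsuddin derivation with $a=0$. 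Neither issue is a flaw in the strategy; both are matters of supplying the precise reference or the missing lines.
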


In the last section we describe the isotropy group of a Shamsuddin derivation with $a\neq 0$ (Proposition \ref{pro1}) and prove, as an application, the following:

 \begin{thm}\label{thm1} Let  $D$ be  a Shamsuddin  derivation with $a\neq 0$. Then $D$ is simple if and only if $\aut(D)=\{id\}$. 
\end{thm}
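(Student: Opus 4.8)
The plan is to prove the two implications of Theorem \ref{thm1} separately, and to observe that one of them is essentially free. If $D$ is simple, then $\aut(D)=\{id\}$ is just the special case of Theorem \ref{thm2} in which $D$ happens to be a Shamsuddin derivation (and this case was already \cite[Thm.6]{Bal}), so nothing new is needed there. Hence the whole task is the converse, which I would establish in contrapositive form: if $D=\partial_x+(ay+b)\partial_y$ is a \emph{non-simple} Shamsuddin derivation with $a\neq 0$, then $\aut(D)\neq\{id\}$.

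So assume $D$ is not simple. By the criterion of \cite{Sha} recalled in the introduction, the equation $D(h)=ah+b$ has a solution $h\in\C[x]$; since $D(h)=h'$ for $h\in\C[x]$, this reads $h'=ah+b$. Introduce the automorphism $\rho\in\autp$ given by $\rho(x)=x$ and $\rho(y)=y-h$. Checking on the two generators $x$ and $y$ — and using precisely the identity $ah+b-h'=0$ on the $y$-generator — one gets
\[
\rho^{-1}D\rho=\partial_x+ay\,\partial_y=:D_0,
\]
a Shamsuddin derivation with $b=0$. Consequently $D=\rho D_0\rho^{-1}$, so $\aut(D)=\rho\,\aut(D_0)\,\rho^{-1}$, and it suffices to see that $\aut(D_0)$ is nontrivial. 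But for every $\lambda\in\C^*$ the linear automorphism $\sigma_\lambda\colon(x,y)\mapsto(x,\lambda y)$ satisfies $\sigma_\lambda D_0=D_0\sigma_\lambda$ (another one-line check on generators), so $\aut(D_0)$ contains a copy of the multiplicative group $\C^*$; in particular it is nontrivial, and hence so is $\aut(D)$. This proves Theorem \ref{thm1}.

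For the companion statement Proposition \ref{pro1} — the actual description of $\aut(D)$ for a Shamsuddin derivation with $a\neq 0$ — one must go further and determine $\aut(D_0)$ completely. Here the plan is: if $\tau\in\aut(D_0)$ then $D_0(\tau(x)-x)=0$, and since $\ker D_0=\C$ (a short degree argument), $\tau(x)=x+c$ for some $c\in\C$; writing $\tau(y)=\sum_j g_j(x)y^j$ and imposing $D_0\tau(y)=\tau(a(x)y)=a(x+c)\tau(y)$ yields, coefficient by coefficient in $y$, the equations $g_j'=\bigl(a(x+c)-j\,a(x)\bigr)g_j$. Since a nonzero polynomial can never be the logarithmic derivative $g_j'/g_j$ of a polynomial, any $g_j\neq 0$ forces $a(x+c)=j\,a(x)$; comparing leading coefficients this kills $g_0$ and all $g_j$ with $j\geq 2$, leaves $g_1\in\C^*$ constant, and forces $c=0$ unless $a$ is constant, giving $\aut(D_0)\cong\C^*$ (resp.\ $\C\times\C^*$ when $a$ is constant). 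I expect this last classification to be the main obstacle; the proof of Theorem \ref{thm1} itself is light once Theorem \ref{thm2} is available, its only non-routine ingredient being the conjugation identity $\rho^{-1}D\rho=D_0$, from which the nontriviality of $\aut(D)$ in the non-simple case is immediate.
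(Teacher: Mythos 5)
Your proof is correct, and for the substantive direction it takes a genuinely different route from the paper. Both treatments dispose of ``simple $\Rightarrow \aut(D)=\{id\}$'' by quoting Theorem \ref{thm2}, and both reduce the converse to Shamsuddin's criterion, i.e.\ to the existence of $h\in\C[x]$ with $h'=ah+b$ when $D$ is not simple. The paper, however, passes from $h$ to a nontrivial automorphism indirectly: it first computes $\aut(D)$ explicitly in all cases (Proposition \ref{pro1}) and then observes (Corollary \ref{cor3}) that in case iii) the equation $g_0'=ag_0+b(1-d)$ governing isotropy elements is, via $g_0=(1-d)h$, exactly the Shamsuddin equation, while cases i) and ii) are visibly nontrivial. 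You instead use $h$ to conjugate $D$ by $\rho:(x,y)\mapsto(x,y-h)$ into the normal form $D_0=\partial_x+ay\,\partial_y$ --- the identity $\rho^{-1}D\rho=D_0$ does reduce on the generator $y$ to $ah+b-h'=0$ and checks out --- and then exhibit the obvious $\C^*$-symmetry $(x,y)\mapsto(x,\lambda y)$ of $D_0$. This is shorter and more conceptual for Theorem \ref{thm1} alone: it repackages non-simplicity as the statement that $D$ is conjugate to a Shamsuddin derivation with $b=0$, whose isotropy is manifestly nontrivial. What it buys less of is the structure theory: the paper's route produces Proposition \ref{pro1} and Corollary \ref{cor3} as standalone results valid for \emph{all} Shamsuddin derivations with $a\neq 0$, whereas your normal-form computation of $\aut(D_0)$ (also correct, and transported back by $\aut(D)=\rho\,\aut(D_0)\,\rho^{-1}$) only describes $\aut(D)$ in the non-simple case. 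Your closing classification of $\aut(D_0)$ is sound but, as you say yourself, is only needed for Proposition \ref{pro1}, not for Theorem \ref{thm1}.
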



Let us now make some remarks in order to connect our results with other topics.
 
First, assume that $\C=\CC$ is the complex number field. Denote by $\calf_D$ the algebraic  foliation on $\CC^2$, with isolated singularities, defined by a derivation $D$.  Note that $D$ is simple if and only if $\calf_D$ does not have  singularities nor  algebraic  leaves.

An element $\rho\in\aut(\CC[x,y])$ is determined by $f:=\rho(x)$ and $g:=\rho(y)$. The associated map $R:\CC^2\to \CC^2$ defined 
by $(t,u)\mapsto ( f(t,u),g(t,u))$ is an automorphism in the algebraic geometric sense. If $\rho$ commutes 
with $D$, then $R$ maps a leaf of $\calf_D$ onto another leaf. 

Consider the group $\bir(\calf_D)$ made up of birational maps $\CC^2\tor\CC^2$ which stabilize $\calf_D$  (\cite{SP}, \cite{CaFa}). We deduce that there is a 
natural one to one  homomorphism $\varphi_D:\aut(D)\to  \bir(\calf_D)$. As shown in the example below, $\varphi_D$ is not in general surjective. Moreover, there are foliations for which $\bir(\calf_D)$ is infinite (see \emph{loc. cit.}). 

We state a conjecture that we shall consider in a forthcoming paper:

\begin{con}
If $D$ is simple, then  $\bir(\calf_D)$ is finite.
\end{con}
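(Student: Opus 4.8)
The plan is to compactify $\calf_D$ and reduce the finiteness of $\bir(\calf_D)$ to the classification of foliated surfaces with infinite group of birational symmetries obtained by Cantat and Favre \cite{CaFa}, and then to use simplicity of $D$ to exclude, one by one, every foliation appearing in that classification. First I would extend $\calf_D$ to a reduced foliation $\bar\calf$ on a smooth projective surface $X$ birational to $\PP^2$, by compactifying $\A^2=\CC^2$ and resolving the singularities created along the boundary divisor $\partial:=X\setminus\A^2$. Since the birational self-maps of $\A^2$ are exactly those of $X$, namely the Cremona group $\cretwo$, a birational map preserves $\calf_D$ if and only if it preserves $\bar\calf$, so $\bir(\calf_D)$ is identified with the group $\bir(X,\bar\calf)$ of birational symmetries of the foliated surface $(X,\bar\calf)$. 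The simplicity of $D$ translates into two global constraints that I would record at the outset: $\calf_D$ has no singular point in $\A^2$, and every $\bar\calf$-invariant algebraic curve on $X$ is contained in the boundary $\partial$.

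Next I would argue by contradiction, assuming $\bir(X,\bar\calf)$ infinite. By \cite{CaFa}, this forces $\bar\calf$ to carry a rigid transverse or geometric structure: either it is algebraically integrable (admits a rational first integral), or it is a Riccati or turbulent foliation transverse to a rational or elliptic fibration $\pi\colon X\to B$, or it is invariant under a global (possibly singular) holomorphic vector field or closed meromorphic $1$-form, or it is a Hilbert--modular foliation. Equivalently, the complementary ``general type'' situation has a canonical model on which $\bir(X,\bar\calf)$ acts faithfully through a finite group (Brunella, McQuillan), so all the content lies in excluding the special cases. The goal is to show that each of them is incompatible with the two constraints above.

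The integrable, turbulent, Hilbert--modular, and transverse-symmetry cases I expect to dispose of together, through the observation that each of them produces an $\bar\calf$-invariant algebraic curve of a controlled type: the generic level curve of a rational first integral, an invariant fibre of the fibration $\pi$, a component of the ramification divisor of the modular structure, or an orbit/separatrix of the global symmetry. By the recorded constraint, any such curve must lie inside $\partial$. The point then is to show that it cannot be hidden entirely at infinity: here I would use that $\calf_D$ is regular throughout $\A^2$, together with the Baum--Bott and Camacho--Sad index relations along the boundary, to bound the amount of invariant structure $\partial$ can support and force at least one such curve to cross $\A^2$, contradicting simplicity.

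The main obstacle is the remaining Riccati case, and it is genuinely harder because simple derivations of this type exist: a simple Shamsuddin derivation $D=\partial_x+(ay+b)\partial_y$ is transverse to the fibration $x=\mathrm{const}$, and its only invariant algebraic curve is the section at infinity, which lies in $\partial$ and so does not violate simplicity. Hence no soft argument excludes this case, and finiteness must be extracted from the finer transverse data. The approach I would take is to translate $\bir(X,\bar\calf)$ into symmetries of the monodromy representation $\mu\colon\pi_1(B\setminus S)\to\pgl_2(\CC)$ of the Riccati equation, where $S\subset B$ is the finite set of singular fibres. A symmetry either preserves $\pi$, and then descends to an automorphism of $B$ permuting $S$ and conjugating $\mu$, or it does not, in which case \cite{CaFa} yields a second invariant fibration and strong rigidity. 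I would prove a dictionary identifying $\bar\calf$-invariant algebraic curves meeting $\A^2$ with finite orbits of $\mu$ crossing the affine fibre; simplicity then says $\mu$ has no such finite orbit, so $\mu$ is nonelementary and its relevant symmetries form a finite group, while the automorphisms of $B$ fixing the marked set $S$ are finite as soon as $|S|\ge 3$. The crux, which I do not expect to yield to a short argument, is twofold: establishing this dictionary precisely, and handling the solvable affine monodromy of Shamsuddin type, where the centralizer in $\pgl_2(\CC)$ is positive-dimensional; here I would bring in Theorem \ref{thm2} --- the triviality of $\aut(D)$ --- to rule out the continuous families of symmetries that such a centralizer would otherwise permit.
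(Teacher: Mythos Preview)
The statement you are attempting to prove is not proved in the paper: it is explicitly stated as a \emph{conjecture}, with the authors announcing only that they ``shall consider [it] in a forthcoming paper''. There is therefore no proof in the paper against which to compare your proposal.

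As for the proposal itself, it is a reasonable research outline rather than a proof, and you are candid about this: you flag the Riccati case as ``the main obstacle'' and identify a ``crux'' that you ``do not expect to yield to a short argument''. That honesty is appropriate, but it means the proposal is not a proof. Two concrete gaps deserve emphasis. First, in the block of cases you hope to dispose of ``together'' (integrable, turbulent, Hilbert--modular, transversely symmetric), the argument that an invariant curve must meet $\A^2$ is asserted via index formulas but not carried out; it is not obvious that Baum--Bott or Camacho--Sad alone suffice, since the boundary configuration after reduction can be intricate. Second, and more seriously, your plan to invoke Theorem~\ref{thm2} in the Riccati case does not close the gap: the paper's own Example shows that the map $\varphi_D\colon\aut(D)\to\bir(\calf_D)$ is not surjective, so the triviality of $\aut(D)$ places no direct constraint on continuous families inside $\bir(\calf_D)$. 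You would need a separate argument showing that any positive-dimensional family of birational symmetries of $\calf_D$ can be realized, after a birational change, by genuine elements of $\aut(D')$ for some simple $D'$ conjugate to $D$ --- and that step is not supplied.
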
 

\smallskip

\begin{exa}
Consider the derivation 
\[D=(1+xy+x^3)\partial_{x}+x(1+xy)\partial_{y}.\] 
 
If $j=e^{2\pi{\bf i}/3}$ is a primitive cubic root of the unity,  the linear automorphisms $(x,y)\mapsto (j^2 x,j y), (x,y)\mapsto (j x,j^2 y)$ belong to $\aut(\calf_D)\subset\bir(\calf_D)$ but not to $\aut(D)$. On the other hand, the birational change of coordinates $u=x/y, v=1/y$ maps $y^{-1}D$ into 
\[D'=(u+v^2-u^2v)\partial_u-u(u+v^2)\partial_v.\]
By \cite[Proposition 1.3]{CDGM} the associated foliation $\calf_{D'}=\calf_D$ has no invariant algebraic curves. Since $D$ does not vanish in $\CC^2$ and $y=0$ is not stable by $D$ we deduce that it is a simple derivation; in particular $\aut(D)$ is trivial.   
\end{exa}

Finally, the notion of simplicity for a derivation may be extended to an arbitrary $\K$-algebra in a natural form. Following the pioneering works of Seidenberg and Hart (resp. \cite{Sei} and \cite{Hart75}) one knows that the local ring of an algebraic variety at a point is regular if and only if it admits a simple derivation. In particular, a polynomial ring $A=\K[x_1,\ldots,x_n]$ over $\K$ admits a simple derivation at each localization by a maximal ideal and there are criteria to decide if a derivation in such a localization is simple (\cite{BLL}). However, there are no simplicity criteria for derivations in $A$, besides Shamsuddin's criterion and a more restrictive one given by Y. Lequain in \cite{Leq07}, where an algorithm is given to decide whether a certain type of Shamsuddin derivation is simple. Furthermore, even for $\K=\CC$ and $n=2$ few simple derivations are known besides the ones given by such a criterion (see for example \cite{Now08}, \cite{Col08}, \cite{Sa12}, \cite{SK} and references therein). Related to these facts we have the following natural question: 

How far a derivation $D\in\der_{\K}(R)$ with $\aut(D)=\{id\}$ is from being simple ? 

\smallskip

\noindent{\bf Acknowledgment}: The second author thanks J\'er\'emy Blanc for useful conversations.
         
\section{Isotropy of simple derivations}\label{sec3}

We start with some general remarks relating $D$, not necessarily simple, with fixed points of elements in $\aut(D)$. Let us first recall some notions.

Consider a derivation $D=a\partial_x+b\partial_y$ of $\C[x,y]$ and fix a point $p=(p_1,p_2)\in\C^2$; denote by $\idealm$ the maximal ideal of $\C[x,y]$ generated by $x-p_1, y-p_2$. If $a$ or $b$ do not belong to $\idealm$, then there exists a unique (convergent if $\K=\CC$; see reference below for the general case) power series $\phi(t),\psi(t)\in\C[[t]]$ such that $\partial_t\phi(t)=a(\phi(t),\psi(t)),  \partial_t\psi(t)=b(\phi(t),\psi(t))$, with $\phi(0)=p_1, \psi(0)=p_2$. Note that $a\not\in\idealm$ or $b\not\in\idealm$ is equivalent to $D(\idealm)\not\subset\idealm$.

A straightforward calculation shows that the $\C$-homomorphism $\varphi:\C[x,y]\to\C[[t]]$ defined by mapping $x\mapsto \phi(t), y\mapsto \psi(t)$ is the unique $\C$-homomorphism such that $\partial_t\varphi=\varphi D$ and $\varphi^{-1}(t\,\C[[t]])=\idealm$. We say that such a $\varphi$ is a \emph{solution of $D$  passing through} $\idealm$ (see \cite[Thm. 7c]{BP} for other properties of this notion).

\begin{lem}\label{lem1}
Let $D$ be a derivation and let $\rho\in\aut(D)$. If $\rho$ fixes a maximal ideal $\idealm$ which is not stable under $D$, then there exists a principal ideal $\ideala\subset\idealm$ such that:

a) $\ideala$ is stable under $D$ and fixed by $\rho$.

b) $\rho$ induces the identity map in $\C[x,y]/\ideala$.
\end{lem}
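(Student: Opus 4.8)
The plan is to exploit the solution $\varphi:\C[x,y]\to\C[[t]]$ of $D$ passing through $\idealm$, whose existence and uniqueness is guaranteed since $\idealm$ is not stable under $D$. The key observation is that precomposing $\varphi$ with $\rho$ gives another $\C$-homomorphism $\varphi\circ\rho:\C[x,y]\to\C[[t]]$, and because $\rho$ commutes with $D$ and fixes $\idealm$, this composite satisfies the same two defining properties: $\partial_t(\varphi\rho)=(\varphi\rho)D$ (using $\rho D=D\rho$) and $(\varphi\rho)^{-1}(t\C[[t]])=\rho^{-1}(\varphi^{-1}(t\C[[t]]))=\rho^{-1}(\idealm)=\idealm$. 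By the stated uniqueness of such a solution, $\varphi\circ\rho=\varphi$; equivalently $\varphi$ is $\rho$-invariant, so $\rho$ acts trivially on the image subring $\varphi(\C[x,y])\subset\C[[t]]$, hence on $\C[x,y]/\ker\varphi$.

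The next step is to identify $\ker\varphi$ and turn it into the principal ideal $\ideala$ required by the statement. Since $D$ is a derivation of $\C[x,y]$ and $\partial_t\varphi=\varphi D$, the kernel $\ker\varphi$ is stable under $D$: if $\varphi(f)=0$ then $\varphi(D(f))=\partial_t\varphi(f)=0$. Also $\ker\varphi\subset\idealm$ because $\varphi^{-1}(t\C[[t]])=\idealm$ and $0\in t\C[[t]]$. The ring $\C[x,y]/\ker\varphi$ embeds in $\C[[t]]$, hence is a one-dimensional integral domain (it is not a field, since $\idealm/\ker\varphi$ is a nonzero prime that is not maximal in $\C[x,y]/\ker\varphi$ — the image of $\varphi$ has fraction field of transcendence degree one), so $\ker\varphi$ is a height-one prime of the UFD $\C[x,y]$ and is therefore principal. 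Set $\ideala=\ker\varphi$. Then $\ideala$ is principal, contained in $\idealm$, stable under $D$ by the above; it is fixed by $\rho$ since $\varphi\rho=\varphi$ forces $\rho(\ker\varphi)=\ker\varphi$; and $\rho$ induces the identity on $\C[x,y]/\ideala$ by the first paragraph. This gives (a) and (b).

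One subtlety to check carefully is that $\ker\varphi$ is genuinely nonzero, i.e.\ that $\varphi$ is not injective — otherwise $\ideala=(0)$ is not contained in a maximal ideal in the intended sense and the conclusion is vacuous or false. This holds because $\C[x,y]$ has Krull dimension $2$ while its image in $\C[[t]]$ has dimension at most $1$ (the fraction field of any subring of $\C[[t]]$, which itself has fraction field of transcendence degree one over $\C$, has transcendence degree at most one), so $\varphi$ cannot be injective; thus $\ker\varphi$ has height exactly one. The main obstacle I anticipate is making this transcendence-degree/dimension argument airtight and confirming that $\C[x,y]/\ker\varphi$ is not a field, so that $\ideala\subsetneq\idealm$ properly and $\ker\varphi$ really is a height-one (hence principal) prime rather than maximal; once that is pinned down, the rest is a direct consequence of the uniqueness of the solution $\varphi$ through $\idealm$.
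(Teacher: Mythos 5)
Your first two paragraphs reproduce the paper's argument essentially verbatim and are correct: uniqueness of the solution $\varphi$ through $\idealm$ gives $\varphi\rho=\varphi$, hence $\rho(\ker\varphi)=\ker\varphi$ and $\rho=id$ on $\C[x,y]/\ker\varphi$; stability of $\ker\varphi$ under $D$ follows from $\partial_t\varphi=\varphi D$; and since $\ker\varphi$ is a prime strictly contained in $\idealm$ it has height $0$ or $1$, hence is principal. Taking $\ideala=\ker\varphi$ proves the lemma.

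Your final paragraph, however, contains a genuine error. You claim $\ker\varphi\neq(0)$ on the grounds that $\C((t))$ has transcendence degree one over $\C$. That is false: $\C[[t]]$ has infinite transcendence degree over $\C$ (for instance $t$ and $\sum_{n\geq 0}t^n/n!$ are algebraically independent), so a two-dimensional domain such as $\C[x,y]$ can perfectly well embed in $\C[[t]]$. Indeed $\varphi$ \emph{is} injective precisely when $D$ is simple --- the paper's remark after the lemma points this out explicitly, and this is the case that matters for Corollary \ref{cor2}. Your worry that $\ideala=(0)$ makes the conclusion ``vacuous or false'' is a misreading of the statement: the zero ideal is principal and is contained in $\idealm$, so the lemma holds with $\ideala=(0)$; in that case part b) says $\rho$ is the identity on $\C[x,y]$ itself, which is exactly the nontrivial conclusion exploited later. (Relatedly, your parenthetical that $\idealm/\ker\varphi$ is ``not maximal'' in $\C[x,y]/\ker\varphi$ is also wrong --- it is maximal, being the image of a maximal ideal.) Delete the last paragraph and the proof is correct and identical to the paper's.
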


\begin{proof}
Let $\idealm$ be a maximal ideal such that $\rho(\idealm)=\idealm$. Consider the unique solution $\varphi:\C[x,y]\to \C[[t]]$ of $D$ passing through $\idealm$. Note that $D$ stabilizes $\ker(\varphi)$; indeed, if $\varphi(g)=0$, then $\varphi(D(g))=\partial_t\varphi(g)=0$.

Since $\partial_t\varphi\rho=\varphi D\rho=\varphi\rho D$ we know that $\varphi\rho$ is a solution of $D$ passing through $\rho^{-1}(\idealm)=\idealm$. Hence, $\varphi\rho=\varphi$ from which it follows $\rho(\ker(\varphi))=\ker(\varphi)$. 

If $g\in\C[x,y]\setminus\ker(\varphi)$, then $\varphi(\rho(g)-g)=0$; so $\rho$ induces the identity map in $\C[x,y]/\ker(\varphi)$. 

Since $\ker(\varphi)$ is a prime ideal strictly contained in $\idealm$ its height is $0$ or 1. Hence $\ker(\varphi)$ is principal, and the proof follows by taking $\ideala=\ker(\varphi)$. 

\end{proof}

\begin{rem}
First, note that in part a) of the lemma above the ideal $\ideala$ may be trivial, as it happens, for example, when $D$ is a simple derivation. Second, if $\rho=id$ the part b) of that lemma is not informative.  Finally, note that the ideal $\ideala$ is uniquely determined by $\idealm$ since $\varphi$ is. 
\end{rem}

\begin{cor}\label{cor2}
Let $D$ be a simple derivation and let $\rho\in\aut(D)$. If $\rho\neq id$, then $\rho^N$ does not fix maximal ideals for any $N$. In particular, either $\rho$ has infinite order or $\rho=id$. 
\end{cor}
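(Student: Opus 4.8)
The plan is to argue by contradiction, combining Lemma \ref{lem1} with the definition of simplicity. Suppose $\rho\in\aut(D)$, $\rho\neq id$, and suppose for some $N\geq 1$ the automorphism $\sigma:=\rho^N$ fixes a maximal ideal $\idealm$. Since $D$ is simple, $D$ does not stabilize $\idealm$ (a simple derivation stabilizes no nontrivial ideal, and $\idealm$ is nontrivial), so the hypothesis of Lemma \ref{lem1} is met by $\sigma$: note $\sigma\in\aut(D)$ because $\aut(D)$ is a group and $\rho$ commutes with $D$. Hence there is a principal ideal $\ideala\subseteq\idealm$ that is $D$-stable, and again by simplicity of $D$ this forces $\ideala=(0)$, i.e.\ $\ker\varphi=(0)$ where $\varphi$ is the solution of $D$ through $\idealm$. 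Thus $\varphi:\C[x,y]\to\C[[t]]$ is injective.

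Now I exploit part b) of the lemma, which in the case $\ideala=(0)$ says that $\sigma$ induces the identity on $\C[x,y]/(0)=\C[x,y]$, i.e.\ $\sigma=id$. So $\rho^N=id$, meaning $\rho$ has finite order. The remaining task is to rule this out: I must show that a nontrivial $\rho\in\aut(D)$ with $D$ simple cannot have finite order. The key point is that a finite-order element $\rho\in\autp$ must fix a maximal ideal — indeed, over an algebraically closed field of characteristic zero a finite subgroup of $\autp$ is linearizable (a theorem essentially going back to the structure of $\autp$ as an amalgamated product, or one may invoke that a finite-order polynomial automorphism of $\A^2$ has a fixed point; the averaging/fixed-point argument gives a $\rho$-fixed closed point and hence a $\rho$-fixed maximal ideal). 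Applying the previous paragraph with $N=1$ to the fixed maximal ideal of $\rho$ itself then yields $\rho=id$, contradicting $\rho\neq id$. This establishes that no $\rho^N$ fixes a maximal ideal, and in particular $\rho$ cannot have finite order, so $\rho$ has infinite order; equivalently, either $\rho$ has infinite order or $\rho=id$.

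The main obstacle is the finite-order step: showing that a finite-order element of $\autp$ necessarily fixes a maximal ideal (equivalently, has a fixed point on $\A^2$). Everything else is a direct application of Lemma \ref{lem1} and the definition of simple derivation. For the obstacle I would cite the linearizability of finite subgroups of the plane Cremona/polynomial automorphism group, or give the self-contained averaging argument: pick any closed point $p$, form the barycenter of its $\rho$-orbit (possible since $\mathrm{char}\,\C=0$ and the orbit is finite), and check — using that $\rho$ acts affinely on the affine span of the orbit, or more carefully via a Reynolds-operator argument on the coordinate ring — that one obtains a genuine fixed closed point; the corresponding maximal ideal is then fixed by $\rho$.
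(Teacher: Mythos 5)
Your argument is correct and follows the same route as the paper: apply Lemma \ref{lem1} to $\rho^N$ at the fixed maximal ideal (noting that simplicity guarantees $\idealm$ is not $D$-stable), use simplicity again to force $\ideala=(0)$ and hence $\rho^N=id$ via part b), then invoke linearizability of finite-order automorphisms to produce a fixed maximal ideal for $\rho$ itself and conclude $\rho=id$. One caveat: the naive barycenter-of-orbit argument you sketch as a fallback does not work as stated, since a nonlinear polynomial automorphism need not fix the barycenter of a finite orbit; you should rely on the linearizability citation (van der Kulk, as in Kraft--Schwarz), which is exactly what the paper does.
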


\begin{proof}
Suppose that $\rho^N$ fixes a maximal ideal $\idealm$ with $N>1$. Since $\rho^N\in\aut(D)$,  Lemma \ref{lem1}a) implies there exists an ideal $\ideala\subset \idealm$, stable under $D$. Since $D$ is simple $\ideala=(0)$. Lemma \ref{lem1}b) implies $\rho^N=id$. 

On the other hand, it is well known that an automorphism of finite order is conjugate to a linear one (it is an easy consequence of van der Kulk Theorem: see for example \cite[\S2]{KS} and note that the theorem holds over $\K$), hence it fixes a maximal ideal. Then we apply again Lemma  \ref{lem1} to obtain $\rho=id$.
\end{proof}

\begin{lem}\label{lem7}
Let $D$ be a simple derivation and let $\rho\in\aut(D)$. If $\rho$ stabilizes the ideal generated by $x\in\C[x,y]$, then $\rho=id$.  
\end{lem}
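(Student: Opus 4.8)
\emph{Proof strategy.} The plan is to produce a maximal ideal fixed by a positive power of $\rho$ and then invoke Corollary \ref{cor2}. Since $\rho((x))=(\rho(x))=(x)$ and $\C[x,y]^{\times}=\C\setminus\{0\}$, we must have $\rho(x)=cx$ for some $c\in\C\setminus\{0\}$. In particular $\rho$, and likewise $\rho^{-1}$, preserve $(x)$, so $\rho$ descends to a $\C$-algebra automorphism $\bar\rho$ of $\C[x,y]/(x)\cong\C[y]$; every such automorphism is affine, say $\bar\rho(y)=\alpha y+\beta$ with $\alpha\in\C\setminus\{0\}$, $\beta\in\C$.

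Next I would exploit the relation $\rho D=D\rho$ on the generator $x$. Putting $a:=D(x)$, it gives $\rho(a)=D(\rho(x))=c\,D(x)=c\,a$. Reducing this identity modulo $(x)$ and writing $a_0(y):=a(0,y)$, one obtains
\[ a_0(\alpha y+\beta)=c\,a_0(y)\quad\text{in }\C[y]. \]
Here simplicity of $D$ enters: $D$ cannot stabilize the nontrivial ideal $(x)$, i.e.\ $a=D(x)\notin(x)$, so $a_0$ is a nonzero polynomial. Let $S\subset\C$ be its finite, nonempty set of roots. The displayed relation shows that the affine bijection $\bar\rho$ maps $S$ into $S$, hence permutes $S$; therefore $\bar\rho^{M}$ fixes every point of $S$ for a suitable integer $M\geq 1$ (for instance $M=|S|!$). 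Fix any $y_0\in S$.

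Finally, $\rho^M$ stabilizes $(x)$ and induces $\bar\rho^M$ on $\C[y]$, which fixes $y_0$; evaluating $\rho^M(x)=c^Mx$ and $\rho^M(y)-y_0$ at $x=0,\ y=y_0$ then shows that $\rho^M$ maps the maximal ideal $\idealm_0=(x,y-y_0)$ into itself, hence fixes it. Since $D$ is simple and a positive power of $\rho$ fixes a maximal ideal, Corollary \ref{cor2} forces $\rho=id$. (Alternatively one may conclude via Lemma \ref{lem1}: since $D$ is simple, $\idealm_0$ is not $D$-stable, so Lemma \ref{lem1} applied to $\rho^M$ yields $\rho^M=id$, and then the finite-order part of Corollary \ref{cor2} gives $\rho=id$.)

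All the computations involved are routine; the only point requiring a little care is the reduction of the commutation identity modulo $(x)$ together with the identification of the induced automorphism of $\C[y]$ as an affine map. That is precisely what reduces the statement to the elementary fact that an affine self-map of $\A^1$ permuting a finite nonempty set has a periodic point, after which everything is formal bookkeeping.
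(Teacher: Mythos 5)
Your overall strategy---produce a maximal ideal fixed by a positive power of $\rho$ and then conclude via Corollary \ref{cor2} (or Lemma \ref{lem1})---is legitimate, and the steps you actually carry out are correct: $\rho(x)=cx$ with $c\in\C^*$, the induced affine automorphism $\bar\rho$ of $\C[x,y]/(x)\cong\C[y]$, the identity $a_0(\alpha y+\beta)=c\,a_0(y)$ for $a_0=D(x)\bmod (x)$, and the observation that simplicity forces $a_0\neq 0$. The gap is the claim that the root set $S$ of $a_0$ is nonempty. A nonzero element of $\C[y]$ can be a nonzero constant, in which case $S=\emptyset$ and your construction of a fixed maximal ideal collapses. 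This is not a marginal case: any simple derivation with $D(x)\equiv$ a nonzero constant modulo $(x)$ escapes your argument; for instance every simple Shamsuddin derivation has $D(x)=1$, and the introductory example $D=(1+xy+x^3)\partial_x+x(1+xy)\partial_y$ has $a_0=1$. In that situation your relation only yields $c=1$, i.e.\ $\rho(x)=x$, and produces no fixed point.

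Moreover, the case you are missing is precisely where the real difficulty lies, which is why the paper's proof is longer. When $a_0$ is constant one can still dispose of the subcase where the induced map $y\mapsto \beta y+g(0)$ has a fixed point ($\beta\neq 1$), but after replacing $\rho$ by a suitable power one is left with $\rho(x)=x$, $\rho(y)=g(x)+y$, $g(0)\neq 0$: an automorphism fixing no maximal ideal, so no fixed-point argument can apply. The paper handles this by comparing coefficients of powers of $y$ in both commutation relations $\rho D(x)=D\rho(x)$ and $\rho D(y)=D\rho(y)$, forcing $D(x)=A\in\C^*$ and $\deg_y D(y)\leq 1$, and then excluding both remaining possibilities using simplicity, a degree count, and Shamsuddin's criterion (a derivation of the form $A\partial_x+b(x)\partial_y$ is never simple); there is also a preliminary root-of-unity analysis for the constant $\alpha$ with $\rho(x)=\alpha x$. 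To complete your proof you would have to supply this entire analysis in the case $a_0\in\C^*$; as written, your argument proves the lemma only when $D(x)$ has a nonconstant reduction modulo $(x)$.
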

\begin{proof}
By Corollary \ref{cor2} it suffices to prove that $\rho$ has finite order.

Assume, by contradiction, that $\rho$ has infinite order. 

There exists $\alpha\in\C^*$ such that $\rho(x)=\alpha x$. Write $\rho(y)=\sum_{j=0}^m g_j y^j$, for $g_j\in\C[x]$, $j=0,\ldots,m$. Since the Jacobian of $(\rho(x), \rho(y))$ belongs to $\C^*$, we obtain $g_1=\beta\in\C^*$ and $g_j=0$ for $j>1$, that is $\rho(y)=g(x)+\beta y$ for some $g\in\C[x]$. Moreover, since $\rho$ does not fix maximal ideals, we get $\beta=1$ and $g(0)\neq 0$; in particular $g\neq 0$.

Now assume that $\alpha$ is not a root of the unity and write $D=a\partial_x+b\partial_y$, with $a,b\in\C[x,y]$. Since $\rho D(x)=D\rho(x)$ we have $a(\alpha x,g(x)+y)=\alpha a(x,y)$. In other words, if $a=\sum_{i=0}^n a_i y^i$ for some $a_i\in\C[x]$, $a_n\neq 0$, we have
\[\sum_{i=0}^n a_i(\alpha x)(g(x)+y)^i=\sum_{i=0}^n \alpha a_i(x) y^i.\] 
Hence $a_n(\alpha x)=\alpha a_n(x)$. By the assumption on $\alpha$ we get $a_n(x)=A_n x$ with $A_n\in\C^*$. 

If $n>0$, we also have $a_{n-1}(\alpha x)+n a_n(\alpha x)g(x)=\alpha a_{n-1}(x)$, that is,  we obtain
\begin{equation}\label{equality}
 a_{n-1}(\alpha x)+n \alpha A_n xg(x)=\alpha a_{n-1}(x).
\end{equation}  
Since $\aut(D)$ is a group, one may replace $\alpha$ with any of its powers, hence the degree of $a_{n-1}$ is necessarily 1. Since $a_{n-1}(0)=\alpha a_{n-1}(0)$ we have $a_{n-1}(x)=A_{n-1} x$, with $A_{n-1}\in\C^*$, but this is not compatible with (\ref{equality}) because $g\neq 0$. Then $n=0$ and $a(x,y)=A_0 x$ with $A_0\in\C^*$, which contradicts simplicity. We conclude that  $\alpha$  is a root of the unity.

By replacing $\rho$ with one of its powers, we may assume $\rho(x)=x$ and $\rho(y)=g(x)+y$ with $g(0)\neq 0$.  As before, if $n>0$ then (\ref{equality}) yields a contradiction, hence $n=0$, that is $a(x,y)=a(x)$. Since $D$ is simple, then  $a\in\C^*$; indeed, if $a=0$ the derivation $D$ stabilizes the ideal generated by $x$ and if $\deg a\geq 1$ it stabilizes the ideal generated by $a(x)$. Set $a=A\in\C^*$.

Now we use $\rho D(y)=D\rho(y)$ to obtain $b(x,y+g(x))=Ag'(x)+b(x,y)$. Write $b=\sum_{i=0}^m b_i y^i$, $b_i\in\C[x]$, $b_m\neq 0$; note that $b=0$ is not possible because $D$ is simple. If $m>1$, by arguing as in the case $n>0$ above we obtain a contradiction with $g\neq 0$. Then either $m=0$ and $b(x,y)=b(x)$ or $m=1$ and $b_1(x)g(x)=Ag'(x)$. Since $A\partial_x$ is a simple derivation on $\C[x]$, the former case contradicts 
 Shamsuddin's  criterion (see \cite{Sha} or \cite[Thm. 13.2.1]{Now}), whereas the latter is clearly not possible by degree reasons. 

Thus $\rho$ has finite order, which completes the proof.
\end{proof}

\noindent{\bf Proof of Theorem \ref{thm2}}.
Suppose $\aut(D)\neq \{id\}$ and take $\rho\in\aut(D)\setminus\{id\}$. By Corollary \ref{cor2} we know that $\rho$ has infinite order and none of its powers fix maximal ideals.

By a result of Lane (see \cite{Lan}) we know that $\rho$ stabilizes a nontrivial ideal $\ideala$. Then  a power $\rho^N$ of $\rho$ stabilizes the minimal associated primes of $\ideala$, which cannot be maximal ideals.  By replacing $\rho$ with $\rho^N$, we may suppose that $\rho$ stabilizes a prime ideal of height 1. Hence we assume there exists an irreducible polynomial $h\in\C[x,y]$ and an element $\mu\in\C^*$ such that $\rho(h)=\mu h$.

Note that a singular point of the curve $(h=0)$ corresponds to a maximal ideal which is fixed by a power of $\rho$, hence such a point cannot exist. 

From a result of Blanc and Stampfli (\cite[Theorem 2]{BS}) we deduce that there exists $\sigma\in\autp$ such that $\sigma(h)$ is either of the form $x$ or $x^ry^s-\lambda$, where $\lambda\in\C^*$, and $r,s$ are relatively prime positive integers. By replacing $\rho$ and $D$ with $\sigma\rho\sigma^{-1}$ and $\sigma D\sigma^{-1}$, respectively, we may assume $h=x$ or $h=x^ry^s-\lambda$. 

By Lemma \ref{lem7} the first possibility does not occur. Then assume $\rho(x^ry^s-\lambda)=\mu(x^ry^s-\lambda)$, with $\mu\in\C^*$; write $\rho(x)=\sum_{i=0}^n f_i y^i$ and $\rho(y)=\sum_{j=0}^m g_j y^j$, where $f_i,g_j\in\C[x]$, $i=0,\ldots,n$ and $j=0,\ldots,m$. Hence $f_n^rg_m^sy^{rn+sm}=\mu x^ry^s$, from which it follows $n=0, m=1$. Recalling that the Jacobian determinant associated to $\rho$ is a nonzero constant, we conclude $\rho(x)=\alpha x, \rho(y)=g_0+\beta y$ for suitable $\alpha,\beta\in\C^*$. Then we have
 \[(\alpha x)^r(g_0+\beta y)^s=\mu x^ry^s+\lambda(1-\mu),\]
which implies $g_0=0$ and $\rho$ fixes the maximal ideal $(x,y)$: contradiction. Hence the proof of Theorem \ref{thm2} is complete. 
\qed

\section{Automorphisms of Shamsuddin derivations}\label{sec2}

Let $D=\partial_x+(ay+b)\partial_y$ be a Shamsuddin derivation. Let $\rho\in\autp$. If $\rho(x)=f, \rho(y)=g$, then $\rho\in \aut(D)$ if and only if $f$ and $g$ verify
\begin{equation}\label{eq1}
\left\{\begin{array}{l}
\partial_x (f)+(ay+b)\partial_y( f)=1\\
\partial_x(g)+(ay+b)\partial_y(g)=\rho(a)g+\rho(b)
\end{array}\right.
\end{equation}
Writing  $f=f_0(x)+\ldots+f_n(x)y^n$ and  $g=g_0(x)+\ldots+g_m(x)y^m$ we obtain that (\ref{eq1}) is  equivalent to
\begin{equation}\label{eq2}
\left\{\begin{array}{l}
f_0'+bf_1+\sum_{i=1}^{n-1} (f_i'+iaf_i+(i+1)bf_{i+1})y^i+(f_n'+naf_n)y^n=1\\
g_0'+bg_1+\sum_{j=1}^{m-1} (g_j'+jag_j+(j+1)bg_{j+1})y^j+(g_m'+mag_m)y^m=\rho(a)g+\rho(b);
\end{array}\right.
\end{equation}
note that if we consider the polynomials $a,b, f$ and $g$  as polynomial functions, then we have $\rho(a)=a\circ f, \rho(b)=b\circ g$.
 
In the following example we treat the trivial case where $a=b=0$.

\begin{exa}
If $a=b=0$, then $\rho\in \aut(D)=\aut(\partial_x)$ if and only if there exist $P\in\C[y]$ and $d\in\C, \beta\in\C^*$ such that
\[\rho(x)=x+P(y), \rho(y)=d+\beta y,\]
i.e., $\aut(D)$ is a semidirect product ${\rm J_o}\rtimes(\C\ltimes\C^*)$, where $(\C\ltimes\C^*)$ has structure given by
\[(d_1,\beta_1)\cdot (d_2,\beta_2)=(d_1+d_2\beta_1,\beta_1\beta_2),\]
and 
\[{\rm J_o}=\{\sigma: \sigma(x)=x+P(y),\sigma(y)=y, P\in\C[y]\}\]
is the so-called \emph{de Jonqui\`eres} group.
\end{exa}

From now on we assume that a Shamsuddin derivation verifies $ay+b\neq 0$.

\begin{pro}\label{pro1}
Let $D=\partial_x+(ay+b)\partial_y$ be a Shamsuddin derivation with $a\neq 0$. We have the following assertions:

i) If $a\in\C^*, b=0$, then $\aut(D)=\K\times\K^*$

ii) If $a,b\in\K^*$, then $\aut(D)=\C\times (\C\ltimes \C^*)$. 

iii) If $\deg a\geq 1$ or $\deg b\geq 1$, then 
\[A(D)=\{\rho; \rho(x)=x, \rho(y)=g_0+dy, g_0'=ag_0+b(1-d), d\in\C^*\};\]
in particular, if $b=0$ we have $\aut(D)=\K^*$.
\end{pro}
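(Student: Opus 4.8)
The plan is to read everything off the system (\ref{eq2}), which is an exact transcription of $\rho D=D\rho$. Fix $\rho\in\aut(D)$ and write $\rho(x)=f=\sum_{i=0}^n f_iy^i$, $\rho(y)=g=\sum_{j=0}^m g_jy^j$ with $f_i,g_j\in\C[x]$. The coefficient of $y^n$ in the first equation of (\ref{eq2}) is $f_n'+naf_n=0$, and since $a\neq0$ no nonzero polynomial can satisfy $f_n'=-naf_n$ with $na\neq0$ (compare degrees, or leading coefficients); hence $n=0$ and $f_0'=1$, so $\rho(x)=x+\lambda$ for a unique $\lambda\in\C$, whence $\rho(a)=a(x+\lambda)$ and $\rho(b)=b(x+\lambda)$. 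As now $\partial_xf=1$ and $\partial_yf=0$, the Jacobian of $\rho$ is $\partial_yg$, which being a nonzero constant forces $g=g_0(x)+dy$ with $d\in\C^*$. Substituting $g=g_0+dy$ into the second equation of (\ref{eq2}) and separating the coefficient of $y$ from the $y$-free part gives exactly
\[a(x+\lambda)=a(x),\qquad g_0'=a\,g_0+b(x+\lambda)-d\,b(x),\]
so that $\rho$ is determined by the pair $(\lambda,d)\in\C\times\C^*$ together with a polynomial $g_0$ solving the second identity; since the only polynomial solution of $g_0'=ag_0$ is $0$, such a $g_0$ is unique whenever it exists.

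The first of these identities drives the case split. If $\deg a\geq1$ it forces $\lambda=0$, so $\rho(x)=x$ and the second identity becomes $g_0'=ag_0+b(1-d)$; this is the description in (iii), and one checks directly that such $\rho$ form a group, with $\rho_{d_1}\circ\rho_{d_2}=\rho_{d_1d_2}$ and the $g_0$-parts combining because the right-hand side is affine in $d$. For $d\neq1$ the equation $g_0'=ag_0+b(1-d)$ is solvable in $\C[x]$ precisely when $D(h)=ah+b$ is, i.e.\ precisely when $D$ is \emph{not} simple --- this is the link that yields Theorem \ref{thm1} --- and when $b=0$ one gets $g_0=0$, hence $\aut(D)=\{\rho:\rho(x)=x,\ \rho(y)=dy\}\cong\C^*$. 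If instead $a\in\C^*$ the first identity is vacuous, $\lambda$ ranges freely over $\C$, the equation for $g_0$ always has its unique polynomial solution ($g_0=0$ when $b=0$, the constant $b(d-1)/a$ when $b\in\C^*$), and writing this out together with the multiplication law gives the descriptions in (i) and (ii); a convenient shortcut here is to conjugate $D$ by the automorphism $y\mapsto y-h$ with $h'=ah-b$ (solvable since $a$ is a nonzero constant), reducing to $D=\partial_x+ay\partial_y$ and transporting its isotropy group.

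The coefficient-chasing in the first paragraph is routine; I expect the only delicate point to be the group-theoretic bookkeeping --- verifying closure under composition and inversion, and recognizing the resulting (semidirect-product) structure in each case --- together with matching, in case (iii), the solvability of $g_0'=ag_0+b(1-d)$ with Shamsuddin's criterion precisely enough that Theorem \ref{thm1} drops out as the stated application.
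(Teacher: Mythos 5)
Your reduction to $\rho(x)=x+\lambda$, $\rho(y)=g_0+dy$ and the two identities $a(x+\lambda)=a(x)$, $g_0'=ag_0+b(x+\lambda)-d\,b(x)$ is correct and is exactly the paper's route (the paper even leaves the first identity implicit, so you are if anything more careful). The problem is that your case split is on $\deg a$ alone, while part (iii) is stated for ``$\deg a\geq 1$ \emph{or} $\deg b\geq 1$''. The subcase $a\in\C^*$, $\deg b\geq 1$ belongs to (iii) but falls into your second branch, where you only treat $b=0$ and $b\in\C^*$; there the identity $a(x+\lambda)=a(x)$ is vacuous and nothing in your argument forces $\lambda=0$. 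Worse, nothing can: for $a\in\C^*$ the map $g_0\mapsto g_0'-ag_0$ is bijective on polynomials of each bounded degree, so $g_0'=ag_0+b(x+\lambda)-d\,b(x)$ has a (unique) polynomial solution for \emph{every} $(\lambda,d)$. Concretely, for $D=\partial_x+(y+x)\partial_y$ the affine map $\rho(x)=x+\lambda$, $\rho(y)=dy+(d-1)x+(d-1-\lambda)$ commutes with $D$ for all $\lambda\in\C$, $d\in\C^*$, so the conclusion $\rho(x)=x$ of (iii) fails in this subcase. (The paper's own argument for $c=0$, via powers of $\rho$, genuinely only uses $\deg a\geq1$; your proof cannot be patched to cover $a\in\C^*$, $\deg b\geq 1$ because the assertion is false there. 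You should flag this rather than silently fold that subcase into (i)--(ii).)

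A second, smaller issue: in case (ii) your own computation shows $g_0$ is forced to be the constant $b(d-1)/a$, hence determined by $d$; the resulting group is parametrized by $(\lambda,d)\in\C\times\C^*$ and is isomorphic to $\C\times\C^*$ --- which your conjugation shortcut ($y\mapsto y-b/a$ reduces to case (i)) confirms. That is \emph{not} the three-parameter group $\C\times(\C\ltimes\C^*)$ asserted in (ii), so ``writing this out gives the description in (ii)'' does not hold; your intermediate result actually contradicts the statement. In short, the parts of your argument that are computations are sound and follow the paper, but the two claims ``this gives (ii)'' and ``this gives (iii)'' are exactly the points where the proof (and, in these subcases, the proposition as printed) breaks down, and they need to be confronted explicitly rather than asserted.
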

\begin{proof}

First we note that $f=f_0=x+c$ and $g=g_0+dy$ for suitable $c\in\K, d\in\K^*$. Indeed, $n>0$ contradicts $f_n'+naf_n=0$ in the top equality in (\ref{eq2}); hence  $f=x+c$ for some $c\in\K$, and then $m\geq 1$ because $\rho$ is an automorphism. Moreover, the Jacobian determinant of $\rho$ is $\sum_{i=1}^m ig_i y^{i-1}$ and belongs to $\C^*$, from which the assertion follows.

Furthermore, from the bottom equality in (\ref{eq2}) we obtain
\begin{equation}\label{eq3}
g_0'+bd=a(x+c)g_0+b(x+c);
\end{equation}
here $a(x+c)$ (analogously $b(x+c)$) denotes the polynomial $a$, thought as a polynomial function, composed with $f=x+c$. 

Now, if $b=0$ we get $g_0':=D(g_0)=\rho(a)g_0$, whence $g_0=0$, because otherwise $\deg g_0'\geq \deg g_0$, which is not possible. We obtain  
\[\aut(D)=\{\rho; \rho(x)=x+c, \rho(y)=dy, c\in\C, d\in\C^*\},\]
and the assertion $i)$ follows easily.

On the other hand, since  $\rho^\ell\in\aut(D)$ for all $\ell\geq 1$, we deduce that the right hand side in both equalities of (\ref{eq3}) does not depend on $c$. Then $\deg a\geq 1$ or $\deg b\geq 1$ implies $c=0$. Indeed, if $c\neq 0$ the second of these implies $a\in\C$, and then $b\in\C$.

If  $\deg a\geq 1$ or $\deg b\geq 1$ (and $c=0$), from $g_0'+bd=ag_0+b$ it follows  $g_0'=ag_0+b(1-d)$, which proves $iii)$. 

Finally, assume $a,b\in\K^*$ and take  $\rho_1,\rho_2\in\aut(D)$. We know that $\rho_i(x)=x+c_i, c_i\in\K$, and $\rho_i(y)=\alpha_i+d_i y$, with $(\alpha_i,d_i)\in\C\times\C^*$. Then 
\[\rho_1\rho_2(x)=c_1+c_2,\ \ \rho_1\rho_2(y)=\alpha_2+d_2\alpha_1+d_1 d_2y.\]
If we consider the semidirect product structure $\C\ltimes \C^*$ with operation given by $(\alpha_1,d_1)\ltimes (\alpha_2,d_2)\mapsto (\alpha_2+d_2\alpha_1, d_1d_2)$ we obtain statement $ii)$, which completes the proof. 
\end{proof}

Note that the statement $iii)$ of the Proposition above does not say anything about whether $\aut(D)$ is trivial. Indeed, let us see two examples.  

\begin{exa}\label{exa4}
Take $a=x^2, b=x^5+x^4+x^3+x^2-2x+\epsilon$, for $\epsilon\in\C$; denote by $D_\epsilon$ the associated Shamsuddin derivation. As it follows from Proposition \ref{pro1}, part iii), we have $\rho(y)=g_0+dy$ with $\deg g_0\leq \deg b-\deg a$. A direct 
computation shows that the relation  $g'_0=ag_0+b(1-d)$ admits solutions with $d\neq 1$ if and only if $\epsilon=-1$. Furthermore, in this case we obtain solutions given by $g_0=g_e=-ex^3-ex^2-ex-4e$ and $d=1-e$, for $e\in\C\backslash\{1\}$. If we denote by $\rho_e\in\aut(D_{-1})$ the automorphism defined by $\rho_e(y)=g_e+(1-e)y$, then we have $\rho_e^{-1}=\rho_k$ with $k=e/(e-1)$. We deduce $\aut(D_{-1})\neq \{id\}$ and $\aut(D_\epsilon)=\{id\}$ if $\epsilon\neq -1$.
\end{exa}

\begin{exa}\label{exa5}
Consider the Shamsuddin  derivation $D=\partial_x+(2xy+x^3)\partial_y$; note $D$ stabilizes the ideal generated by $2y+x^2+1$. A straightforward computation using part $iii)$ of Proposition \ref{pro1} shows that $\aut(D)$ is the 1-parameter family of non-linear automorphisms $\rho_t$, $t\in\C^*$, defined by 
\[\rho_t(x)=x,\quad \rho_t(y)=\frac{(t-1)}{2}(x^2+1)+ ty.\] 
Note that this family is in fact a 1-parameter subgroup of $\aut(D)$.  
\end{exa}

\begin{cor}\label{cor3}
Assume $a\neq 0$. If $\deg a\geq 1$ or $\deg b\geq 1$, then $\aut(D)\neq \{id\}$ if and only if there exists $h\in\C[x]$  such that $D(h)=ah+b$.  In particular, if $b\neq 0$ one has $\deg b\geq \deg a$.
\end{cor}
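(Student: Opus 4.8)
The plan is to exploit Proposition \ref{pro1}(iii), which already pins down $\aut(D)$ completely in the case $\deg a\geq 1$ or $\deg b\geq 1$: every nontrivial $\rho\in\aut(D)$ has the form $\rho(x)=x$, $\rho(y)=g_0+dy$ with $d\in\C^*$ and $g_0\in\C[x]$ satisfying $g_0'=ag_0+b(1-d)$. So the whole statement reduces to a reformulation of the solvability of this differential equation. First I would note that $\aut(D)\neq\{id\}$ means there is such a $\rho$ with $(g_0,d)\neq(0,1)$; I must argue that this forces the existence of a pair with $d\neq 1$. Indeed if $d=1$ then $g_0'=ag_0$, and since $\deg a\geq 1$ would make $\deg(ag_0)\geq\deg g_0'$ impossible unless $g_0=0$, while $\deg a=0$ (so $a\in\C^*$, forcing $\deg b\geq 1$) gives $g_0'=ag_0$ with $a\neq 0$, again impossible for $g_0\neq 0$ by degree count. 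Either way $d=1$ forces $g_0=0$, i.e. $\rho=id$; hence a nontrivial element must have $d\neq 1$.

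Next, given a nontrivial $\rho$ with associated pair $(g_0,d)$, $d\neq 1$, set $h:=g_0/(1-d)\in\C[x]$. Dividing the relation $g_0'=ag_0+b(1-d)$ by $(1-d)$ gives $h'=ah+b$, i.e. $D(h)=ah+b$ (recall $D$ restricted to $\C[x]$ acts as $\partial_x$, since $D=\partial_x+(ay+b)\partial_y$ and $h$ depends only on $x$). Conversely, if such an $h\in\C[x]$ exists, then for any $d\in\C^*\setminus\{1\}$ the polynomial $g_0:=(1-d)h$ satisfies $g_0'=ag_0+b(1-d)$, so the map $\rho(x)=x$, $\rho(y)=(1-d)h+dy$ is a well-defined element of $\autp$ (its Jacobian is the constant $d\in\C^*$) lying in $\aut(D)$ by Proposition \ref{pro1}(iii), and it is not the identity. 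This proves the equivalence.

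For the final assertion, suppose $b\neq 0$ and $\aut(D)\neq\{id\}$, so that $D(h)=ah+b$ for some $h\in\C[x]$; equivalently $h'=ah+b$. I would argue by degrees. If $h=0$ then $b=0$, contrary to hypothesis, so $h\neq 0$; write $\delta=\deg h\geq 0$. Then $\deg h'=\delta-1$ (interpreted as $-\infty$ if $\delta=0$) while $\deg(ah)=\deg a+\delta$. Since $\deg a\geq 1$ (here we are in the subcase where this holds; if instead only $\deg b\geq 1$ with $a\in\C^*$, then $\deg(ah)=\delta>\delta-1=\deg h'$ as well), the leading term of $ah$ cannot be cancelled by $h'$, so it must be cancelled by $b$, forcing $\deg b=\deg(ah)=\deg a+\delta\geq\deg a$. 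This gives $\deg b\geq\deg a$, as claimed. I do not expect any serious obstacle here; the only point requiring a little care is the bookkeeping in the $d=1$ elimination and making sure the degree argument covers both subcases ($\deg a\geq 1$ and $\deg a=0,\deg b\geq 1$) uniformly, which it does since in both the term $ah$ strictly dominates $h'$ in degree when $h\neq 0$.
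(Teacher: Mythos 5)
Your proof is correct and follows essentially the same route as the paper: apply Proposition \ref{pro1}(iii), observe that a nontrivial element forces $d\neq 1$, and pass between $g_0$ and $h=(1-d)^{-1}g_0$; the concluding degree count for $\deg b\geq\deg a$ is the argument the paper leaves as ``clear.'' Your explicit elimination of the case $d=1$, $g_0\neq 0$ is a detail the paper glosses over, but it is the intended reasoning.
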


\begin{proof}
Take $\rho\in\aut(D)\setminus\{id\}$. Part $iii)$ of Proposition \ref{pro1} implies $\rho\in \aut(D)$ if and only if there exists $g_0\in\C[x]$ and $d\in \K^*$ with $g_0'=ag_0+b(1-d)$ such that $\rho(x)=x$ and $\rho(y)=g_0+dy$. Since $\rho=id$ corresponds to $d\neq 1$, the first assertion follows by taking $h$ to be $(1-d)^{-1}g_0$. The rest of the proof is clear. 
\end{proof}

\noindent{\bf Proof of Theorem \ref{thm1}}. By  (\cite{Sha}),  $D$ is simple if and only if there is no $h\in\C[x]$ such that $D(h)=ah+b$. Moreover, by Theorem \ref{thm2} it suffices to prove the ``if'' part of the theorem. Thus, by Proposition \ref{pro1} we have $\deg a\geq 1$ or $\deg b\geq 1$, and the assertion follows from Corollary \ref{cor3}.

\qed
 
We finish the paper by giving an example of isotropy elements for Shamsuddin derivations with $a=0$ and $b\in\C^*$; we do not know how to treat the general case.

\begin{exa}\label{exa6}
Suppose $a=0$ and $b\in\C^*$. We are looking for elements in $\aut(D)$ with $n=0$, where $n$ is as in (\ref{eq1}). In this case we obtain, as before, $f=f_0=x+c$ from which it follows $m=1, g_1\in\C^*$ and (\ref{eq3}) becomes 
\[g_0'+bg_1=b.\]
An automorphism $\rho\in\aut(D)$ with $n=0$ is then defined by $\rho(x)=x+c$ and $\rho(y)=d+b(1-\beta)x+\beta y$ for a $\beta\in\C^*$.  There is then
a bijection between  $\C\times (\C\times\C^*)$ and such elements in $\aut(D)$ given by
\begin{equation}
(c,(d,\beta))\mapsto (\rho(x),\rho(y)).\label{eq3bis}
\end{equation}
In fact the subset of elements in $\aut(D)$ with $n=0$ is a subgroup isomorphic to a semidirect structure $\C\rtimes(\C\ltimes \C^*)$. Indeed, note that  for elements $\rho_1,\rho_2\in\aut(D)$ with
\[\rho_i(x)=x+c_i, \rho_i(y)=d_i+b(1-\beta_i)x+\beta_iy, i=1,2\]
we have $\rho_1\rho_2(x)=x+c_1+c_2$ and $\rho_1\rho_2(y)=d_2+\beta_2d_1+b(1-\beta_1\beta_2)x+\beta_1\beta_2y$; under the bijection (\ref{eq3bis}) 
the product $\rho_1\rho_2$ corresponds to $(c_1+c_2,(d_2+\beta_2 d_1, \beta_1\beta_2))$. Hence we have an exact sequence of groups
\[\xymatrix{\C\ar@{->}[r]&\C\rtimes(\C\ltimes \C^*)\ar@{->}[r]& \C\ltimes \C}\]
where the homomorphisms are, respectively, $c\mapsto (c,(0,1)), (c,(d,\beta))\mapsto (d,\beta)$, the semidirect product structure on the right-hand side being given by
\[(d_1,\beta_1)\rtimes (d_1,\beta_1)=(d_2+\beta_2 d_1,\beta_1\beta_2).\]

\end{exa}

\begin{tiny}
\noindent\address{Lu\'is Gustavo Doninelli Mendes, Instituto de Matem\'atica, UFRGS, Porto Alegre, BRASIL}
\email{mendes@mat.ufrgs.br}\\
\address{Ivan Pan, Centro de Matem\'atica, Facultad de Ciencias, UdelaR, Montevideo - URUGUAY}
 \email{ivan@cmat.edu.uy}
\end{tiny}
\end{document}